\newtheorem{theorem}{Theorem}
\theoremstyle{plain}
\newtheorem{axiom}{Axiom}
\newtheorem{conjecture}{Conjecture}
\newtheorem{corollary}{Corollary}
\newtheorem{definition}{Definition}
\newtheorem{example}{Example}
\newtheorem{exercise}{Exercise}
\newtheorem{lemma}{Lemma}
\newtheorem{proposition}{Proposition}
\newtheorem{remark}{Remark}
\numberwithin{equation}{section}
\chardef\@x10\chardef\@xv60
\def\tcitime{
\def\@time{%
  \@minute\time\@hour\@minute\divide\@hour\@xv
  \ifnum\@hour<\@x 0\fi\the\@hour:%
  \multiply\@hour\@xv\advance\@minute-\@hour
  \ifnum\@minute<\@x 0\fi\the\@minute
  }}%
\def\QCTOpt[#1]#2{%
  \def\QCTOptB{#1}
  \def\QCTOptA{#2}
}
\def\QCTNOpt#1{%
  \def\QCTOptA{#1}
  \let\QCTOptB\empty
}
\def\Qct{%
  \@ifnextchar[{%
    \QCTOpt}{\QCTNOpt}
}
\def\QCBOpt[#1]#2{%
  \def\QCBOptB{#1}
  \def\QCBOptA{#2}
}
\def\QCBNOpt#1{%
  \def\QCBOptA{#1}
  \let\QCBOptB\empty
}
\def\Qcb{%
  \@ifnextchar[{%
    \QCBOpt}{\QCBNOpt}
}
\def\PrepCapArgs{%
  \ifx\QCBOptA\empty
    \ifx\QCTOptA\empty
      {}%
    \else
      \ifx\QCTOptB\empty
        {\QCTOptA}%
      \else
        [\QCTOptB]{\QCTOptA}%
      \fi
    \fi
  \else
    \ifx\QCBOptA\empty
      {}%
    \else
      \ifx\QCBOptB\empty
        {\QCBOptA}%
      \else
        [\QCBOptB]{\QCBOptA}%
      \fi
    \fi
  \fi
}
\def\GRAPHICSPS#1{%
 \ifcase\GRAPHICSTYPE
   \special{ps: #1}%
 \or
   \special{language "PS", include "#1"}%
 \fi
}%
\def\graffile#1#2#3#4{%
    \bgroup
    \leavevmode
    \@ifundefined{bbl@deactivate}{\def~{\string~}}{\activesoff}
    \raise -#4 \BOXTHEFRAME{%
        \hbox to #2{\raise #3\hbox to #2{\null #1\hfil}}}%
    \egroup
}%
\def\draftbox#1#2#3#4{%
 \leavevmode\raise -#4 \hbox{%
  \frame{\rlap{\protect\tiny #1}\hbox to #2%
   {\vrule height#3 width\z@ depth\z@\hfil}%
  }%
 }%
}%
\newif\ifwasdraft
\def\GRAPHIC#1#2#3#4#5{%
 \ifnum\draft=\@ne\draftbox{#2}{#3}{#4}{#5}%
  \else\graffile{#1}{#3}{#4}{#5}%
  \fi
 }%
\def\addtoLaTeXparams#1{%
    \edef\LaTeXparams{\LaTeXparams #1}}%
\newif\ifBoxFrame \BoxFramefalse
\newif\ifOverFrame \OverFramefalse
\newif\ifUnderFrame \UnderFramefalse
\def\BOXTHEFRAME#1{%
   \hbox{%
      \ifBoxFrame
         \frame{#1}%
      \else
         {#1}%
      \fi
   }%
}
\def\doFRAMEparams#1{\BoxFramefalse\OverFramefalse\UnderFramefalse\readFRAMEparams#1\end}%
\def\readFRAMEparams#1{%
 \ifx#1\end%
  \let\next=\relax
  \else
  \ifx#1i\dispkind=\z@\fi
  \ifx#1d\dispkind=\@ne\fi
  \ifx#1f\dispkind=\tw@\fi
  \ifx#1t\addtoLaTeXparams{t}\fi
  \ifx#1b\addtoLaTeXparams{b}\fi
  \ifx#1p\addtoLaTeXparams{p}\fi
  \ifx#1h\addtoLaTeXparams{h}\fi
  \ifx#1X\BoxFrametrue\fi
  \ifx#1O\OverFrametrue\fi
  \ifx#1U\UnderFrametrue\fi
  \ifx#1w
    \ifnum\draft=1\wasdrafttrue\else\wasdraftfalse\fi
    \draft=\@ne
  \fi
  \let\next=\readFRAMEparams
  \fi
 \next
 }%
\def\IFRAME#1#2#3#4#5#6{%
      \bgroup
      \let\QCTOptA\empty
      \let\QCTOptB\empty
      \let\QCBOptA\empty
      \let\QCBOptB\empty
      #6%
      \parindent=0pt%
      \leftskip=0pt
      \rightskip=0pt
      \setbox0 = \hbox{\QCBOptA}%
      \@tempdima = #1\relax
      \ifOverFrame
          \typeout{This is not implemented yet}%
          \show\HELP
      \else
         \ifdim\wd0>\@tempdima
            \advance\@tempdima by \@tempdima
            \ifdim\wd0 >\@tempdima
               \textwidth=\@tempdima
               \setbox1 =\vbox{%
                  \noindent\hbox to \@tempdima{\hfill\GRAPHIC{#5}{#4}{#1}{#2}{#3}\hfill}\\%
                  \noindent\hbox to \@tempdima{\parbox[b]{\@tempdima}{\QCBOptA}}%
               }%
               \wd1=\@tempdima
            \else
               \textwidth=\wd0
               \setbox1 =\vbox{%
                 \noindent\hbox to \wd0{\hfill\GRAPHIC{#5}{#4}{#1}{#2}{#3}\hfill}\\%
                 \noindent\hbox{\QCBOptA}%
               }%
               \wd1=\wd0
            \fi
         \else
            \ifdim\wd0>0pt
              \hsize=\@tempdima
              \setbox1 =\vbox{%
                \unskip\GRAPHIC{#5}{#4}{#1}{#2}{0pt}%
                \break
                \unskip\hbox to \@tempdima{\hfill \QCBOptA\hfill}%
              }%
              \wd1=\@tempdima
           \else
              \hsize=\@tempdima
              \setbox1 =\vbox{%
                \unskip\GRAPHIC{#5}{#4}{#1}{#2}{0pt}%
              }%
              \wd1=\@tempdima
           \fi
         \fi
         \@tempdimb=\ht1
         \advance\@tempdimb by \dp1
         \advance\@tempdimb by -#2%
         \advance\@tempdimb by #3%
         \leavevmode
         \raise -\@tempdimb \hbox{\box1}%
      \fi
      \egroup%
}%
\def\DFRAME#1#2#3#4#5{%
 \begin{center}
     \let\QCTOptA\empty
     \let\QCTOptB\empty
     \let\QCBOptA\empty
     \let\QCBOptB\empty
     \ifOverFrame
        #5\QCTOptA\par
     \fi
     \GRAPHIC{#4}{#3}{#1}{#2}{\z@}
     \ifUnderFrame
        \nobreak\par\nobreak#5\QCBOptA
     \fi
 \end{center}%
 }%
\def\FFRAME#1#2#3#4#5#6#7{%
 \begin{figure}[#1]%
  \let\QCTOptA\empty
  \let\QCTOptB\empty
  \let\QCBOptA\empty
  \let\QCBOptB\empty
  \ifOverFrame
    #4
    \ifx\QCTOptA\empty
    \else
      \ifx\QCTOptB\empty
        \caption{\QCTOptA}%
      \else
        \caption[\QCTOptB]{\QCTOptA}%
      \fi
    \fi
    \ifUnderFrame\else
      \label{#5}%
    \fi
  \else
    \UnderFrametrue%
  \fi
  \begin{center}\GRAPHIC{#7}{#6}{#2}{#3}{\z@}\end{center}%
  \ifUnderFrame
    #4
    \ifx\QCBOptA\empty
      \caption{}%
    \else
      \ifx\QCBOptB\empty
        \caption{\QCBOptA}%
      \else
        \caption[\QCBOptB]{\QCBOptA}%
      \fi
    \fi
    \label{#5}%
  \fi
  \end{figure}%
 }%
\def\makeactives{
  \catcode`\"=\active
  \catcode`\;=\active
  \catcode`\:=\active
  \catcode`\'=\active
  \catcode`\~=\active
}
   \gdef\activesoff{%
      \def"{\string"}
      \def;{\string;}
      \def:{\string:}
      \def'{\string'}
      \def~{\string~}
    }
\def\FRAME#1#2#3#4#5#6#7#8{%
 \bgroup
 \ifnum\draft=\@ne
   \wasdrafttrue
 \else
   \wasdraftfalse%
 \fi
 \def\LaTeXparams{}%
 \dispkind=\z@
 \def\LaTeXparams{}%
 \doFRAMEparams{#1}%
 \ifnum\dispkind=\z@\IFRAME{#2}{#3}{#4}{#7}{#8}{#5}\else
  \ifnum\dispkind=\@ne\DFRAME{#2}{#3}{#7}{#8}{#5}\else
   \ifnum\dispkind=\tw@
    \edef\@tempa{\noexpand\FFRAME{\LaTeXparams}}%
    \@tempa{#2}{#3}{#5}{#6}{#7}{#8}%
    \fi
   \fi
  \fi
  \ifwasdraft\draft=1\else\draft=0\fi{}%
  \egroup
 }%
\def\TEXUX#1{"texux"}
\long\def\QQQ#1#2{%
     \long\expandafter\def\csname#1\endcsname{#2}}%
\long\def\QQA#1#2{}%
\def\QTR#1#2{{\csname#1\endcsname #2}}
\def\EXPAND#1[#2]#3{}%
\def\NOEXPAND#1[#2]#3{}%
\def\LaTeXparent#1{}%
\def\ChildStyles#1{}%
\def\ChildDefaults#1{}%
\def\QTagDef#1#2#3{}%
  \providecommand{\UNICODE}[2][]{}
\def\QQfnmark#1{\footnotemark}
 \def\abstract{%
  \if@twocolumn
   \section*{Abstract (Not appropriate in this style!)}%
   \else \small
   \begin{center}{\bf Abstract\vspace{-.5em}\vspace{\z@}}\end{center}%
   \quotation
   \fi
  }%
   \def\registered{\relax\ifmmode{}\r@gistered
                    \else$\m@th\r@gistered$\fi}%
 \def\r@gistered{^{\ooalign
  {\hfil\raise.07ex\hbox{$\scriptstyle\rm\text{R}$}\hfil\crcr
  \mathhexbox20D}}}}{}%
\newdimen\theight
\def\Column{%
 \vadjust{\setbox\z@=\hbox{\scriptsize\quad\quad tcol}%
  \theight=\ht\z@\advance\theight by \dp\z@\advance\theight by \lineskip
  \kern -\theight \vbox to \theight{%
   \rightline{\rlap{\box\z@}}%
   \vss
   }%
  }%
 }%
\def\qed{%
 \ifhmode\unskip\nobreak\fi\ifmmode\ifinner\else\hskip5\p@\fi\fi
 \hbox{\hskip5\p@\vrule width4\p@ height6\p@ depth1.5\p@\hskip\p@}%
 }%
\def\miss{\hbox{\vrule height2\p@ width 2\p@ depth\z@}}%
\def\tcol#1{{\baselineskip=6\p@ \vcenter{#1}} \Column}  %
\def\newfmtname{LaTeX2e}
  \DeclareOldFontCommand{\rm}{\normalfont\rmfamily}{\mathrm}
  \DeclareOldFontCommand{\sf}{\normalfont\sffamily}{\mathsf}
  \DeclareOldFontCommand{\tt}{\normalfont\ttfamily}{\mathtt}
  \DeclareOldFontCommand{\bf}{\normalfont\bfseries}{\mathbf}
  \DeclareOldFontCommand{\it}{\normalfont\itshape}{\mathit}
  \DeclareOldFontCommand{\sl}{\normalfont\slshape}{\@nomath\sl}
  \DeclareOldFontCommand{\sc}{\normalfont\scshape}{\@nomath\sc}
\def\alpha{{\Greekmath 010B}}%
\def\beta{{\Greekmath 010C}}%
\def\gamma{{\Greekmath 010D}}%
\def\delta{{\Greekmath 010E}}%
\def\epsilon{{\Greekmath 010F}}%
\def\zeta{{\Greekmath 0110}}%
\def\eta{{\Greekmath 0111}}%
\def\theta{{\Greekmath 0112}}%
\def\iota{{\Greekmath 0113}}%
\def\kappa{{\Greekmath 0114}}%
\def\lambda{{\Greekmath 0115}}%
\def\mu{{\Greekmath 0116}}%
\def\nu{{\Greekmath 0117}}%
\def\xi{{\Greekmath 0118}}%
\def\pi{{\Greekmath 0119}}%
\def\rho{{\Greekmath 011A}}%
\def\sigma{{\Greekmath 011B}}%
\def\tau{{\Greekmath 011C}}%
\def\upsilon{{\Greekmath 011D}}%
\def\phi{{\Greekmath 011E}}%
\def\chi{{\Greekmath 011F}}%
\def\psi{{\Greekmath 0120}}%
\def\omega{{\Greekmath 0121}}%
\def\varepsilon{{\Greekmath 0122}}%
\def\vartheta{{\Greekmath 0123}}%
\def\varpi{{\Greekmath 0124}}%
\def\varrho{{\Greekmath 0125}}%
\def\varsigma{{\Greekmath 0126}}%
\def\varphi{{\Greekmath 0127}}%
\def\nabla{{\Greekmath 0272}}
\def\FindBoldGroup{%
   {\setbox0=\hbox{$\mathbf{x\global\edef\theboldgroup{\the\mathgroup}}$}}%
}
\def\Greekmath#1#2#3#4{%
    \if@compatibility
        \ifnum\mathgroup=\symbold
           \mathchoice{\mbox{\boldmath$\displaystyle\mathchar"#1#2#3#4$}}%
                      {\mbox{\boldmath$\textstyle\mathchar"#1#2#3#4$}}%
                      {\mbox{\boldmath$\scriptstyle\mathchar"#1#2#3#4$}}%
                      {\mbox{\boldmath$\scriptscriptstyle\mathchar"#1#2#3#4$}}%
        \else
           \mathchar"#1#2#3#4%
        \fi
    \else
        \FindBoldGroup
        \ifnum\mathgroup=\theboldgroup 
           \mathchoice{\mbox{\boldmath$\displaystyle\mathchar"#1#2#3#4$}}%
                      {\mbox{\boldmath$\textstyle\mathchar"#1#2#3#4$}}%
                      {\mbox{\boldmath$\scriptstyle\mathchar"#1#2#3#4$}}%
                      {\mbox{\boldmath$\scriptscriptstyle\mathchar"#1#2#3#4$}}%
        \else
           \mathchar"#1#2#3#4%
        \fi
	  \fi}
\newif\ifGreekBold  \GreekBoldfalse
\let\SAVEPBF=\pbf
\def\pbf{\GreekBoldtrue\SAVEPBF}%
  \newcounter{equationnumber}
  \def\mathletters{%
     \addtocounter{equation}{1}
     \edef\@currentlabel{\theequation}%
     \setcounter{equationnumber}{\c@equation}
     \setcounter{equation}{0}%
     \edef\theequation{\@currentlabel\noexpand\alph{equation}}%
  }
    \def\BibTeX{{\rm B\kern-.05em{\sc i\kern-.025em b}\kern-.08em
                 T\kern-.1667em\lower.7ex\hbox{E}\kern-.125emX}}}{}%
\def\AmS{{\protect\usefont{OMS}{cmsy}{m}{n}%
                A\kern-.1667em\lower.5ex\hbox{M}\kern-.125emS}}}{}%
\def\@@eqncr{\let\@tempa\relax
    \ifcase\@eqcnt \def\@tempa{& & &}\or \def\@tempa{& &}%
      \else \def\@tempa{&}\fi
     \@tempa
     \if@eqnsw
        \iftag@
           \@taggnum
        \else
           \@eqnnum\stepcounter{equation}%
        \fi
     \fi
     \global\tag@false
     \global\@eqnswtrue
     \global\@eqcnt\z@\cr}
\def\TCItag{\@ifnextchar*{\@TCItagstar}{\@TCItag}}
\def\@TCItag#1{%
    \global\tag@true
    \global\def\@taggnum{(#1)}}
\def\@TCItagstar*#1{%
    \global\tag@true
    \global\def\@taggnum{#1}}
\let\DOTSI\relax
\def\RIfM@{\relax\ifmmode}%
\def\FN@{\futurelet\next}%
\def\iint{\DOTSI\intno@\tw@\FN@\ints@}%
\def\iiint{\DOTSI\intno@\thr@@\FN@\ints@}%
\def\iiiint{\DOTSI\intno@4 \FN@\ints@}%
\def\idotsint{\DOTSI\intno@\z@\FN@\ints@}%
\def\ints@{\findlimits@\ints@@}%
\newif\iflimtoken@
\newif\iflimits@
\def\findlimits@{\limtoken@true\ifx\next\limits\limits@true
 \else\ifx\next\nolimits\limits@false\else
 \limtoken@false\ifx\ilimits@\nolimits\limits@false\else
 \ifinner\limits@false\else\limits@true\fi\fi\fi\fi}%
\def\multint@{\int\ifnum\intno@=\z@\intdots@                          
 \else\intkern@\fi                                                    
 \ifnum\intno@>\tw@\int\intkern@\fi                                   
 \ifnum\intno@>\thr@@\int\intkern@\fi                                 
 \int}
\def\multintlimits@{\intop\ifnum\intno@=\z@\intdots@\else\intkern@\fi
 \ifnum\intno@>\tw@\intop\intkern@\fi
 \ifnum\intno@>\thr@@\intop\intkern@\fi\intop}%
\def\intic@{%
    \mathchoice{\hskip.5em}{\hskip.4em}{\hskip.4em}{\hskip.4em}}%
\def\negintic@{\mathchoice
 {\hskip-.5em}{\hskip-.4em}{\hskip-.4em}{\hskip-.4em}}%
\def\ints@@{\iflimtoken@                                              
 \def\ints@@@{\iflimits@\negintic@
   \mathop{\intic@\multintlimits@}\limits                             
  \else\multint@\nolimits\fi                                          
  \eat@}
 \else                                                                
 \def\ints@@@{\iflimits@\negintic@
  \mathop{\intic@\multintlimits@}\limits\else
  \multint@\nolimits\fi}\fi\ints@@@}%
\def\intkern@{\mathchoice{\!\!\!}{\!\!}{\!\!}{\!\!}}%
\def\plaincdots@{\mathinner{\cdotp\cdotp\cdotp}}%
\def\intdots@{\mathchoice{\plaincdots@}%
 {{\cdotp}\mkern1.5mu{\cdotp}\mkern1.5mu{\cdotp}}%
 {{\cdotp}\mkern1mu{\cdotp}\mkern1mu{\cdotp}}%
 {{\cdotp}\mkern1mu{\cdotp}\mkern1mu{\cdotp}}}%
\def\RIfM@{\relax\protect\ifmmode}
\def\text{\RIfM@\expandafter\text@\else\expandafter\mbox\fi}
\let\nfss@text\text
\def\text@#1{\mathchoice
   {\textdef@\displaystyle\f@size{#1}}%
   {\textdef@\textstyle\tf@size{\firstchoice@false #1}}%
   {\textdef@\textstyle\sf@size{\firstchoice@false #1}}%
   {\textdef@\textstyle \ssf@size{\firstchoice@false #1}}%
   \glb@settings}
\def\textdef@#1#2#3{\hbox{{%
                    \everymath{#1}%
                    \let\f@size#2\selectfont
                    #3}}}
\newif\iffirstchoice@
\def\Let@{\relax\iffalse{\fi\let\\=\cr\iffalse}\fi}%
\def\vspace@{\def\vspace##1{\crcr\noalign{\vskip##1\relax}}}%
\def\multilimits@{\bgroup\vspace@\Let@
 \baselineskip\fontdimen10 \scriptfont\tw@
 \advance\baselineskip\fontdimen12 \scriptfont\tw@
 \lineskip\thr@@\fontdimen8 \scriptfont\thr@@
 \lineskiplimit\lineskip
 \vbox\bgroup\ialign\bgroup\hfil$\m@th\scriptstyle{##}$\hfil\crcr}%
\def\Sb{_\multilimits@}%
\def\endSb{\crcr\egroup\egroup\egroup}%
\def\Sp{^\multilimits@}%
\newdimen\ex@
\def\rightarrowfill@#1{$#1\m@th\mathord-\mkern-6mu\cleaders
 \hbox{$#1\mkern-2mu\mathord-\mkern-2mu$}\hfill
 \mkern-6mu\mathord\rightarrow$}%
\def\leftarrowfill@#1{$#1\m@th\mathord\leftarrow\mkern-6mu\cleaders
 \hbox{$#1\mkern-2mu\mathord-\mkern-2mu$}\hfill\mkern-6mu\mathord-$}%
\def\leftrightarrowfill@#1{$#1\m@th\mathord\leftarrow
\mkern-6mu\cleaders
 \hbox{$#1\mkern-2mu\mathord-\mkern-2mu$}\hfill
 \mkern-6mu\mathord\rightarrow$}%
\def\overrightarrow{\mathpalette\overrightarrow@}%
\def\overrightarrow@#1#2{\vbox{\ialign{##\crcr\rightarrowfill@#1\crcr
 \noalign{\kern-\ex@\nointerlineskip}$\m@th\hfil#1#2\hfil$\crcr}}}%
\def\overleftarrow{\mathpalette\overleftarrow@}%
\def\overleftarrow@#1#2{\vbox{\ialign{##\crcr\leftarrowfill@#1\crcr
 \noalign{\kern-\ex@\nointerlineskip}$\m@th\hfil#1#2\hfil$\crcr}}}%
\def\overleftrightarrow{\mathpalette\overleftrightarrow@}%
\def\overleftrightarrow@#1#2{\vbox{\ialign{##\crcr
   \leftrightarrowfill@#1\crcr
 \noalign{\kern-\ex@\nointerlineskip}$\m@th\hfil#1#2\hfil$\crcr}}}%
\def\underrightarrow{\mathpalette\underrightarrow@}%
\def\underrightarrow@#1#2{\vtop{\ialign{##\crcr$\m@th\hfil#1#2\hfil
  $\crcr\noalign{\nointerlineskip}\rightarrowfill@#1\crcr}}}%
\def\underleftarrow{\mathpalette\underleftarrow@}%
\def\underleftarrow@#1#2{\vtop{\ialign{##\crcr$\m@th\hfil#1#2\hfil
  $\crcr\noalign{\nointerlineskip}\leftarrowfill@#1\crcr}}}%
\def\underleftrightarrow{\mathpalette\underleftrightarrow@}%
\def\underleftrightarrow@#1#2{\vtop{\ialign{##\crcr$\m@th
  \hfil#1#2\hfil$\crcr
 \noalign{\nointerlineskip}\leftrightarrowfill@#1\crcr}}}%
\def\qopnamewl@#1{\mathop{\operator@font#1}\nlimits@}
\let\nlimits@\displaylimits
\def\setboxz@h{\setbox\z@\hbox}
\def\varlim@#1#2{\mathop{\vtop{\ialign{##\crcr
 \hfil$#1\m@th\operator@font lim$\hfil\crcr
 \noalign{\nointerlineskip}#2#1\crcr
 \noalign{\nointerlineskip\kern-\ex@}\crcr}}}}
 \def\rightarrowfill@#1{\m@th\setboxz@h{$#1-$}\ht\z@\z@
  $#1\copy\z@\mkern-6mu\cleaders
  \hbox{$#1\mkern-2mu\box\z@\mkern-2mu$}\hfill
  \mkern-6mu\mathord\rightarrow$}
\def\leftarrowfill@#1{\m@th\setboxz@h{$#1-$}\ht\z@\z@
  $#1\mathord\leftarrow\mkern-6mu\cleaders
  \hbox{$#1\mkern-2mu\copy\z@\mkern-2mu$}\hfill
  \mkern-6mu\box\z@$}
\def\projlim{\qopnamewl@{proj\,lim}}
\def\injlim{\qopnamewl@{inj\,lim}}
\def\varinjlim{\mathpalette\varlim@\rightarrowfill@}
\def\varprojlim{\mathpalette\varlim@\leftarrowfill@}
\def\varliminf{\mathpalette\varliminf@{}}
\def\varliminf@#1{\mathop{\underline{\vrule\@depth.2\ex@\@width\z@
   \hbox{$#1\m@th\operator@font lim$}}}}
\def\varlimsup{\mathpalette\varlimsup@{}}
\def\varlimsup@#1{\mathop{\overline
  {\hbox{$#1\m@th\operator@font lim$}}}}
\def\align{\@verbatim \frenchspacing\@vobeyspaces \@alignverbatim
You are using the "align" environment in a style in which it is not defined.}
\let\csname endalign*\endcsname =\endtrivlist
\def\alignat{\@verbatim \frenchspacing\@vobeyspaces \@alignatverbatim
You are using the "alignat" environment in a style in which it is not defined.}
\let\csname endalignat*\endcsname =\endtrivlist
\def\xalignat{\@verbatim \frenchspacing\@vobeyspaces \@xalignatverbatim
You are using the "xalignat" environment in a style in which it is not defined.}
\let\csname endxalignat*\endcsname =\endtrivlist
\def\gather{\@verbatim \frenchspacing\@vobeyspaces \@gatherverbatim
You are using the "gather" environment in a style in which it is not defined.}
\let\csname endgather*\endcsname =\endtrivlist
\def\multiline{\@verbatim \frenchspacing\@vobeyspaces \@multilineverbatim
You are using the "multiline" environment in a style in which it is not defined.}
\let\csname endmultiline*\endcsname =\endtrivlist
\def\arrax{\@verbatim \frenchspacing\@vobeyspaces \@arraxverbatim
You are using a type of "array" construct that is only allowed in AmS-LaTeX.}
\def\tabulax{\@verbatim \frenchspacing\@vobeyspaces \@tabulaxverbatim
You are using a type of "tabular" construct that is only allowed in AmS-LaTeX.}
\let\csname endarrax*\endcsname =\endtrivlist
\let\csname endtabulax*\endcsname =\endtrivlist
 \def\endequation{%
     \ifmmode\ifinner 
      \iftag@
        \addtocounter{equation}{-1} 
        $\hfil
           \displaywidth\linewidth\@taggnum\egroup \endtrivlist
        \global\tag@false
        \global\@ignoretrue
      \else
        $\hfil
           \displaywidth\linewidth\@eqnnum\egroup \endtrivlist
        \global\tag@false
        \global\@ignoretrue
      \fi
     \else
      \iftag@
        \addtocounter{equation}{-1} 
        \eqno \hbox{\@taggnum}
        \global\tag@false%
        $$\global\@ignoretrue
      \else
        \eqno \hbox{\@eqnnum}
        $$\global\@ignoretrue
      \fi
     \fi\fi
 }
 \newif\iftag@ \tag@false
 \def\TCItag{\@ifnextchar*{\@TCItagstar}{\@TCItag}}
 \def\@TCItag#1{%
     \global\tag@true
     \global\def\@taggnum{(#1)}}
 \def\@TCItagstar*#1{%
     \global\tag@true
     \global\def\@taggnum{#1}}
     \def\tag{\@ifnextchar*{\@tagstar}{\@tag}}
     \def\@tag#1{%
         \global\tag@true
         \global\def\@taggnum{(#1)}}
     \def\@tagstar*#1{%
         \global\tag@true
         \global\def\@taggnum{#1}}
\begin{document}

\title[Two point Gauss--Legendre Quadrature Rule
for $\mathcal{RS}$--integral] {Two point Gauss--Legendre
Quadrature Rule for Riemann--Stieltjes integrals}
\author[M.W. Alomari]{Mohammad W. Alomari}
\address{Department of Mathematics,
Faculty of Science and Information Technology, Jadara University, 21110 Irbid, Jordan}
\email{mwomath@gmail.com}

\date{\today}
\subjclass[2010]{26D10, 26D15}

\keywords{Gauss-Legendre quadrature, Riemann--Stieltjes integral.}

\begin{abstract}
In order to approximate the Riemann--Stieltjes integral $\int_a^b
{f\left( t \right)dg\left( t \right)}$ by $2$--point Gaussian
quadrature rule, we introduce the quadrature rule
\begin{align*}
\int_{ - 1}^1 {f\left( t \right)dg\left( t \right)}  \approx A
f\left( { - \frac{{\sqrt 3 }}{3}} \right) + B f\left(
{\frac{{\sqrt 3 }}{3}} \right),
\end{align*}
for suitable choice of $A$ and $B$. Error estimates for this
approximation under various assumptions for the functions involved
are provided as well.
\end{abstract}

\maketitle

\section{Introduction}
In numerical analysis, inequalities play a main role in error
estimations. A few years ago, by using modern theory of
inequalities and Peano kernel approach a number of authors have
considered an error analysis of some quadrature rules of
Newton-Cotes type. In particular, the Mid-point, Trapezoid and
Simpson's rules have been investigated more recently with the view
of obtaining bounds for the quadrature rules in terms of at most
first derivative. A comprehensive list of preprints related to
this subject may be found at http://ajmaa.org/RGMIA

The Newton--Cotes formulas use values of function at equally
spaced points. The same practice when the formulas are combined to
form the composite rules, but this restriction can significantly
decrease the accuracy of the approximation. In fact, these methods
are inappropriate when integrating a function on an interval that
contains both regions with large functional variation and regions
with small functional variation. If the approximation error is to
be evenly distributed, a smaller step size is needed for the
large-variation regions than for those with less variation.

Among others Gaussian quadrature rules gives the highest possible
degree of precision; so that it is recommended to be `almost' the
method of choice.

In order to investigate $2$-point Gauss-Legendre quadrature rule,
Ujevi\'{c} \cite{Ujevic1} obtained bounds for absolutely
continuous functions with derivatives belong to $L_2(a,b)$, as
follows:
\begin{theorem}
Let $f:[-1,1] \longrightarrow \mathbb{R}$ be an absolutely
continuous whose derivative $f^{\prime} \in L_2(-1,1)$. Then
\begin{align}
\label{eq1.1}\left| {f\left( {- \frac{\sqrt {3}}{3} } \right) +
f\left( {\frac{\sqrt {3}}{3}} \right) - \int_{ - 1}^1 {f\left( t
\right)dt} } \right| \le \sqrt {\frac{{4 - 2\sqrt 3 }}{3}}  \cdot
\sigma ^{1/2} \left( {f'} \right),
\end{align}
where,
\begin{align*}
\sigma \left( g \right) = \left( {b - a} \right)\mathcal{T}\left(
{g,g} \right);
\end{align*}
and
\begin{align*}
\mathcal{T}\left( {g,g} \right) = \frac{1}{{b - a}}\left\| g
\right\|_2^2  - \frac{1}{{\left( {b - a} \right)^2 }}\left(
{\int_a^b {g\left( t \right)dt} } \right)^2.
\end{align*}
Inequality (\ref{eq1.1}) is sharp in the sense that the constant
$\sqrt {\frac{{4 - 2\sqrt 3 }}{3}}$ cannot be replaced by a
smaller one.
\end{theorem}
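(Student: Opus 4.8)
The plan is to write the quadrature error as a single integral against a Peano-type kernel and then apply the Cauchy--Schwarz inequality in $L_2(-1,1)$, after a mean-zero reduction that turns a crude $\|f'\|_2$ estimate into the sharp $\sigma^{1/2}(f')$ one.

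First, set $\alpha=\sqrt3/3$ and introduce the piecewise linear kernel
\[
K(t)=\begin{cases}
t+1, & -1\le t<-\alpha,\\
t, & -\alpha\le t<\alpha,\\
t-1, & \alpha\le t\le 1.
\end{cases}
\]
Integrating by parts on each of the three subintervals (the jumps of size $1$ at $\pm\alpha$ are exactly what generate the two nodal values), I would verify the identity
\[
\int_{-1}^1 K(t)f'(t)\,dt = f\!\left(-\tfrac{\sqrt3}{3}\right)+f\!\left(\tfrac{\sqrt3}{3}\right)-\int_{-1}^1 f(t)\,dt .
\]
Since $K$ is odd, $\int_{-1}^1 K(t)\,dt=0$, so for every constant $\gamma$ one has $\int_{-1}^1 K(t)f'(t)\,dt=\int_{-1}^1 K(t)\bigl(f'(t)-\gamma\bigr)\,dt$; I would take $\gamma=\tfrac12\int_{-1}^1 f'(t)\,dt=\tfrac{f(1)-f(-1)}{2}$.

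Applying Cauchy--Schwarz to the last integral yields
\[
\left| f\!\left(-\tfrac{\sqrt3}{3}\right)+f\!\left(\tfrac{\sqrt3}{3}\right)-\int_{-1}^1 f(t)\,dt \right| \le \|K\|_2\,\|f'-\gamma\|_2 .
\]
It remains to evaluate the two factors. Expanding the square and using $\int_{-1}^1 f'=2\gamma$ gives $\|f'-\gamma\|_2^2=\|f'\|_2^2-\tfrac12\bigl(f(1)-f(-1)\bigr)^2$, which, upon inserting $b-a=2$ into the definitions of $\mathcal{T}$ and $\sigma$, is precisely $\sigma(f')$. For the kernel, a routine integration (again using oddness) gives $\|K\|_2^2=\tfrac23\bigl[(1-\alpha)^3+\alpha^3\bigr]$, and with $\alpha=1/\sqrt3$ the $\tfrac{1}{3\sqrt3}$ terms cancel, leaving $\|K\|_2^2=\tfrac23(2-\sqrt3)=\tfrac{4-2\sqrt3}{3}$. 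Combining the three displays gives the asserted inequality.

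For sharpness, equality in Cauchy--Schwarz forces $f'(t)-\gamma=\lambda K(t)$ a.e. for some scalar $\lambda$; since $\int_{-1}^1 K=0$ this is consistent with the defining relation for $\gamma$, so the non-affine function $f(t)=\int_{-1}^t K(s)\,ds$ attains equality and the constant $\sqrt{(4-2\sqrt3)/3}$ cannot be lowered. The only genuinely delicate steps are pinning down the kernel and its jump structure so that the first identity is exact, and carrying out the cancellation that reduces $\|K\|_2^2$ to $\tfrac{4-2\sqrt3}{3}$; everything else is bookkeeping.
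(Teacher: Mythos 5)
This theorem is quoted in the paper's introduction from Ujevi\'{c} \cite{Ujevic1} and is not proved there, so there is no in-paper argument to compare against; judged on its own, your proof is correct. The identity $\int_{-1}^1 K(t)f'(t)\,dt=f(-\alpha)+f(\alpha)-\int_{-1}^1 f(t)\,dt$ follows from piecewise integration by parts exactly as you indicate, the mean-zero reduction gives $\|f'-\gamma\|_2^2=\|f'\|_2^2-\tfrac12\bigl(f(1)-f(-1)\bigr)^2=\sigma(f')$, the computation $\|K\|_2^2=\tfrac23\bigl[\alpha^3+(1-\alpha)^3\bigr]=\tfrac{4-2\sqrt3}{3}$ is right, and $f(t)=\int_{-1}^t K(s)\,ds$ does attain equality, settling sharpness; this is essentially the standard Peano-kernel/pre-Gr\"{u}ss argument of the cited source.
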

\noindent Some Gaussian and Gaussian-like quadrature rules are
considered in \cite{Ujevic2}.
\newline

The Riemann--Stieltjes integral $\int_{a}^{b}{f\left( t\right)
dg\left( t\right) }$ is an important concept in Mathematics with
multiple applications in several subfields including Probability
Theory \& Statistics, Complex Analysis, Functional Analysis,
Operator Theory and others.

In Numerical Integration, the number of proposed quadrature rules
to approximate this type of integrals is very small by comparison
with the huge
number of methods available to approximate the classical Riemann integral $%
\int_{a}^{b}{f\left( t\right) dt}$.

In recent years, the approximation problem of the
Riemann--Stieltjes integral $\int_a^b {fdg}$ has been studied with
the tools of modern inequalities; therefore several error
approximation for proposed quadrature rules had been established.
The most famous and interesting approximations have been done
using Ostrowski and generalized trapezoid type inequalities.

In 2000, Dragomir \cite{Dragomir3} (see also \cite{Dragomir2}) has
introduced the following significant quadrature rule:
\begin{align*}
 \int_a^b {f\left( t \right)du\left( t \right)} \cong f\left( x \right) \left[ {u\left( {b} \right) - u\left( a \right)}
\right],
\end{align*}
and several error bounds under various assumptions to the function
involved were obtained, the reader may refer to \cite{alomari3},
\cite{Barnett1}--\cite{CeroneDragomir1},
\cite{Dragomir2}--\cite{Dragomir3} and the references therein.

From a different point of view, the authors of \cite{Dragomir4}
considered the problem of approximating the Riemann--Stieltjes
integral $\int_a^b {f\left( t \right)du\left( t \right)}$ via the
generalized trapezoid rule $\left[ {u\left( x \right) - u\left( a
\right)} \right]f\left( a \right) + \left[ {u\left( b \right) -
u\left( x \right)} \right]f\left( b \right)$, i.e.,
\begin{align*}
\int_a^b {f\left( t \right)du\left( t \right)} \cong \left[
{u\left( x \right) - u\left( a \right)} \right]f\left( a \right) +
\left[ {u\left( b \right) - u\left( x \right)} \right]f\left( b
\right), \forall x \in [a,b].
\end{align*}
For other related results see \cite{Barnett1,Barnett2} and
\cite{Dragomir5}--\cite{DragomirFedotov}.

In 2008, Mercer \cite{Mercer} proved a new version of
Hermite--Hadamard inequality for Riemann--Stieltjes integral, and
introduced the following Trapezoid type rule:
\begin{align}
\int_{a}^{b}{fdg}\approx \left[ {G-g\left( a\right) }\right] f\left( a\right) +%
\left[ {g\left( b\right) -G}\right] f\left( b\right)
\end{align}
where, $G:= \frac{1}{b - a} \int_a^b {g(t)dt}$.

More recently, Alomari \cite{alomari2} and \cite{alomari3}
introduced the quadrature rule:
\begin{align*}
\int_a^b {f\left( t \right)du\left( t \right)} \cong \left[
{u\left( {\frac{{a + b}}{2}} \right) - u\left( a \right)}
\right]f\left( x \right) + \left[ {u\left( b \right) - u\left(
{\frac{{a + b}}{2}} \right)} \right]f\left( {a + b - x} \right),
\end{align*}
$\forall x \in \left[ {a,\frac{{a + b}}{2}} \right]$, and
therefore several error bounds of this approximation rule under
various assumptions for the functions involved are proved. For new
result regarding the above quadrature rules the reader may refer
to \cite{alomari1} and \cite{alomari4}.

\section{Two--point Gaussian Quadrature Rule}

To establish a two point Gauss-Legendre quadrature rule for the
Riemann--Stieltjes integral $\int_a^b {f\left( t \right)dg\left( t
\right)} $, let us seek numbers $A$ and $B$ such that
\begin{align}
\label{eq2.1}\int_{ - 1}^1 {f\left( t \right)dg\left( t \right)}
\approx Af\left( { - \frac{{\sqrt 3 }}{3}} \right) + Bf\left(
{\frac{{\sqrt 3 }}{3}} \right).
\end{align}
To find the scalars $A$ and $B$, let $f(t)=1$ and then $f(t)=t$ in
(\ref{eq2.1}); respectively. By simple calculations we get
\begin{align}
A &= \frac{3}{{2\sqrt 3 }}\left[ {\int_{ - 1}^1 {g\left( t
\right)dt}  - \left( {\frac{3 - \sqrt 3}{3} } \right)g\left( 1
\right) - \left( {\frac{\sqrt 3  + 3}{3}} \right)g\left( { - 1}
\right)} \right]\label{eq2.2}
\end{align}
and
\begin{align}
B &=  \frac{3}{{2\sqrt 3 }}\left[ {\left( {\frac{3 + \sqrt 3}{3} }
\right)g\left( 1 \right) + \left( {\frac{3 - \sqrt 3}{3} }
\right)g\left( { - 1} \right) - \int_{ - 1}^1 {g\left( t
\right)dt} } \right],\label{eq2.3}
\end{align}
and therefore by (\ref{eq2.1}), we may write
\begin{multline}
\label{eq2.4}\int_{ - 1}^1 {f\left( t \right)dg\left( t \right)}
\\
\approx \frac{3}{{2\sqrt 3 }}\left[ {\int_{ - 1}^1 {g\left( t
\right)dt}  - \left( {\frac{3 - \sqrt 3}{3} } \right)g\left( 1
\right) - \left( {\frac{\sqrt 3  + 3}{3}} \right)g\left( { - 1}
\right)} \right]f\left( { - \frac{{\sqrt 3 }}{3}} \right)
\\
+\frac{3}{{2\sqrt 3 }}\left[ {\left( {\frac{3 + \sqrt 3}{3} }
\right)g\left( 1 \right) + \left( {\frac{3 - \sqrt 3}{3} }
\right)g\left( { - 1} \right) - \int_{ - 1}^1 {g\left( t
\right)dt} } \right]f\left( {\frac{{\sqrt 3 }}{3}} \right).
\end{multline}
As special cases, we have
\begin{enumerate}
\item If $g$ is an odd function, so that we have $ g\left( { - x}
\right) =  - g\left( x \right)$, for all $x \in [-1,1]$ and
$\int_{ - 1}^1 {g\left( t \right)dt} = 0$. Therefore,
(\ref{eq2.4}), becomes
\begin{align}
\label{eq2.5}\int_{ - 1}^1 {f\left( t \right)dg\left( t \right)}
&\approx g\left( { 1} \right) \left[ {f\left( { - \frac{{\sqrt 3
}}{3}} \right) + f\left( {\frac{{\sqrt 3 }}{3}} \right)} \right].
\end{align}
For instance, if $g(t):= t$ for all $t \in [-1,1]$, then
\begin{align}
\label{eq2.6}\int_{ - 1}^1 {f\left( t \right)dt}  \approx f\left(
{ - \frac{{\sqrt 3 }}{3}} \right) + f\left( {\frac{{\sqrt 3 }}{3}}
\right),
\end{align}
which reduces to the classical Gauss--Legendre quadrature formula
for the Riemann integral $\int_{ - 1}^1 {f\left( t \right)dt}$.

\item If $g$ is an even function, so that we have $ g\left( { - x}
\right) =  g\left( x \right)$, for all $x \in [-1,1]$ and $\int_{
- 1}^1 {g\left( t \right)dt} = 2 \int_{0}^1 {g\left( t
\right)dt}$. Therefore, (\ref{eq2.4}), becomes
\begin{align}
\label{eq2.7}\int_{ - 1}^1 {f\left( t \right)dg\left( t \right)}
&\approx \frac{3}{{\sqrt 3 }}\left[ {g\left( 1 \right) -
\int_{0}^1 {g\left( t \right)dt}} \right] \left[ {f\left(
{\frac{{\sqrt 3 }}{3}} \right) - f\left( { - \frac{{\sqrt 3 }}{3}}
\right) } \right].
\end{align}
\end{enumerate}

Now, we are ready to state our first result.

\begin{theorem}
\label{thm1}Let $f,g : [-1,1] \to \mathbb{R}$ be such that $f$ is
of $r$-$H_f$--Holder type on $[-1,1]$, i.e.,
\begin{align*}
\left| {f\left( x \right) - f\left( y \right)} \right| \le H_f\left| {x - y} \right|^r
\end{align*}
for all $x,y \in [-1,1]$, where $r>0$ and $H_f >0$
are given,
and $g$ is of bounded variation on $[-1,1]$. Then,
\begin{align}
\label{main.ineq.1}\left| {\int_{ - 1}^1 {f\left( t
\right)dg\left( t \right)} - Af\left( { - \frac{{\sqrt 3 }}{3}}
\right) - Bf\left( {\frac{{\sqrt 3 }}{3}} \right)} \right| \le H_f
\left( {\frac{{3 + \sqrt 3 }}{3}} \right)^r  \cdot \bigvee_{ -
1}^1 \left( g \right),
\end{align}
where $A$ and $B$ are given in (\ref{eq2.2}) and (\ref{eq2.3});
respectively.
\end{theorem}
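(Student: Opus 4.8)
The plan is to recast the quadrature error as a single Riemann--Stieltjes integral of the linear interpolation remainder of $f$, estimate that integral using that $g$ is of bounded variation, and only at the end invoke the Hölder hypothesis to size the remainder.

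First I would observe that the rule (\ref{eq2.1}) is exact on affine functions: by the way $A$ and $B$ were chosen --- the substitutions $f\equiv1$ and $f(t)=t$ producing (\ref{eq2.2}) and (\ref{eq2.3}) --- the linear functionals $\phi\mapsto A\phi(-\tfrac{\sqrt3}{3})+B\phi(\tfrac{\sqrt3}{3})$ and $\phi\mapsto\int_{-1}^1\phi\,dg$ agree at $\phi\equiv1$ and at $\phi(t)=t$, hence on every affine $\phi$. Let $\ell$ be the affine interpolant of $f$ at the two Gauss nodes,
\[
\ell(t)=\frac{1-\sqrt3\,t}{2}\,f\!\left(-\frac{\sqrt3}{3}\right)+\frac{1+\sqrt3\,t}{2}\,f\!\left(\frac{\sqrt3}{3}\right),
\]
which satisfies $\ell(\mp\tfrac{\sqrt3}{3})=f(\mp\tfrac{\sqrt3}{3})$. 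Then $Af(-\tfrac{\sqrt3}{3})+Bf(\tfrac{\sqrt3}{3})=A\ell(-\tfrac{\sqrt3}{3})+B\ell(\tfrac{\sqrt3}{3})=\int_{-1}^1\ell(t)\,dg(t)$, so the left-hand side of (\ref{main.ineq.1}) equals $\left|\int_{-1}^1\bigl(f(t)-\ell(t)\bigr)\,dg(t)\right|$. Since $f$ is Hölder, hence continuous, $f-\ell$ is continuous, and as $g$ has bounded variation the classical estimate for Riemann--Stieltjes integrals against a bounded-variation integrator gives
\[
\left|\int_{-1}^1\bigl(f(t)-\ell(t)\bigr)\,dg(t)\right|\ \le\ \Bigl(\max_{t\in[-1,1]}\bigl|f(t)-\ell(t)\bigr|\Bigr)\,\bigvee_{-1}^1(g).
\]
Thus (\ref{main.ineq.1}) reduces to the pointwise bound $\max_{t\in[-1,1]}|f(t)-\ell(t)|\le H_f\bigl(\tfrac{3+\sqrt3}{3}\bigr)^r$.

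For this I would use the interpolation-remainder identity: with $\mu(t)=\tfrac{1-\sqrt3\,t}{2}$ the Lagrange weight at $-\tfrac{\sqrt3}{3}$ (so $1-\mu(t)=\tfrac{1+\sqrt3\,t}{2}$ is the weight at $\tfrac{\sqrt3}{3}$),
\[
f(t)-\ell(t)=\mu(t)\Bigl(f(t)-f\!\left(-\tfrac{\sqrt3}{3}\right)\Bigr)+\bigl(1-\mu(t)\bigr)\Bigl(f(t)-f\!\left(\tfrac{\sqrt3}{3}\right)\Bigr),
\]
whence $|f(t)-\ell(t)|\le H_f\bigl(|\mu(t)|\,|t+\tfrac{\sqrt3}{3}|^{r}+|1-\mu(t)|\,|t-\tfrac{\sqrt3}{3}|^{r}\bigr)$, and for $t\in[-1,1]$ both $|t+\tfrac{\sqrt3}{3}|$ and $|t-\tfrac{\sqrt3}{3}|$ are at most $1+\tfrac{\sqrt3}{3}=\tfrac{3+\sqrt3}{3}$. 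On the central interval $[-\tfrac{\sqrt3}{3},\tfrac{\sqrt3}{3}]$ one has $\mu(t),1-\mu(t)\in[0,1]$ with $\mu(t)+(1-\mu(t))=1$, so the bound $H_f\bigl(\tfrac{3+\sqrt3}{3}\bigr)^r$ is immediate there. On the two extrapolation intervals $[-1,-\tfrac{\sqrt3}{3}]$ and $[\tfrac{\sqrt3}{3},1]$ one of the weights leaves $[0,1]$ and the identity must be regrouped, e.g. on $[-1,-\tfrac{\sqrt3}{3}]$ as
\[
f(t)-\ell(t)=\Bigl(f(t)-f\!\left(-\tfrac{\sqrt3}{3}\right)\Bigr)+\bigl(\mu(t)-1\bigr)\Bigl(f\!\left(\tfrac{\sqrt3}{3}\right)-f\!\left(-\tfrac{\sqrt3}{3}\right)\Bigr),\qquad 0\le\mu(t)-1\le\tfrac{\sqrt3-1}{2},
\]
and symmetrically on $[\tfrac{\sqrt3}{3},1]$; estimating the two pieces by the Hölder bound and collecting the largest contribution over the three subintervals yields the constant $H_f\bigl(\tfrac{3+\sqrt3}{3}\bigr)^r$. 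Combining with the bounded-variation estimate proves (\ref{main.ineq.1}).

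The structural part --- exactness on affine functions, hence error $=\int_{-1}^1(f-\ell)\,dg$, hence the bounded-variation estimate --- is routine and mirrors the Dragomir-type arguments referenced in the Introduction. The delicate point, which I expect to be the main obstacle, is the pointwise estimate on the extrapolation intervals: there the Lagrange weights are no longer a partition of unity, the naive inequality $|\mu(t)|+|1-\mu(t)|\le1$ fails, and the regrouping has to be arranged so that the ``long'' distance $\tfrac{3+\sqrt3}{3}$ is paired only with coefficients that do not exceed the stated constant.
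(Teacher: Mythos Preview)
Your reduction of the error to $\int_{-1}^{1}(f-\ell)\,dg$ and the bounded-variation estimate are correct, but the pointwise claim you need, $\max_{t\in[-1,1]}|f(t)-\ell(t)|\le H_f\bigl(\tfrac{3+\sqrt3}{3}\bigr)^{r}$, is \emph{false} for small $r$, so no regrouping on the extrapolation intervals can rescue it. Take $H_f=1$ and prescribe $f(-\tfrac{\sqrt3}{3})=0$, $f(\tfrac{\sqrt3}{3})=\bigl(\tfrac{2\sqrt3}{3}\bigr)^{r}$, $f(-1)=\bigl(\tfrac{3-\sqrt3}{3}\bigr)^{r}$; the three pairwise $r$-H\"older constraints are satisfied (two with equality, and $|f(-1)-f(\tfrac{\sqrt3}{3})|<\bigl(\tfrac{3+\sqrt3}{3}\bigr)^{r}$), so these data extend to an $r$-H\"older function on $[-1,1]$. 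Then $\ell(-1)=\tfrac{1-\sqrt3}{2}\bigl(\tfrac{2\sqrt3}{3}\bigr)^{r}<0$ and
\[
|f(-1)-\ell(-1)|=\Bigl(\tfrac{3-\sqrt3}{3}\Bigr)^{r}+\tfrac{\sqrt3-1}{2}\Bigl(\tfrac{2\sqrt3}{3}\Bigr)^{r}\ \xrightarrow[r\to0^{+}]{}\ \tfrac{\sqrt3+1}{2}\approx1.366,
\]
whereas $\bigl(\tfrac{3+\sqrt3}{3}\bigr)^{r}\to1$. The underlying reason is precisely the extrapolation you flagged: for $t\notin[-\tfrac{\sqrt3}{3},\tfrac{\sqrt3}{3}]$ the value $\ell(t)$ leaves the segment joining $f(-\tfrac{\sqrt3}{3})$ and $f(\tfrac{\sqrt3}{3})$, so $|f(t)-\ell(t)|$ is not controlled by a single H\"older increment of $f$, and the ``delicate point'' is in fact an obstruction, not just a difficulty.

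The paper avoids this by subtracting a \emph{constant} rather than the affine interpolant. Using only $A+B=g(1)-g(-1)$ it writes
\[
\mathcal{ER}(f,g)=\int_{-1}^{1}\Bigl(f(t)-\frac{Af(-\tfrac{\sqrt3}{3})+Bf(\tfrac{\sqrt3}{3})}{A+B}\Bigr)\,dg(t),
\]
applies the bounded-variation bound, and then splits $(A+B)f(t)-Af(-\tfrac{\sqrt3}{3})-Bf(\tfrac{\sqrt3}{3})=A\bigl(f(t)-f(-\tfrac{\sqrt3}{3})\bigr)+B\bigl(f(t)-f(\tfrac{\sqrt3}{3})\bigr)$; each piece is bounded by a single H\"older increment with $\sup_{[-1,1]}|t\pm\tfrac{\sqrt3}{3}|^{r}=\bigl(\tfrac{3+\sqrt3}{3}\bigr)^{r}$, and the weights $A,B$ cancel against $A+B$. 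The subtracted constant, being a weighted mean of $f(\pm\tfrac{\sqrt3}{3})$, never overshoots the way your extrapolated line does, which is why the target constant survives. If you want to keep your $g$-independent route, you would have to subtract a constant lying between $f(-\tfrac{\sqrt3}{3})$ and $f(\tfrac{\sqrt3}{3})$ (for instance their midpoint) rather than $\ell$.
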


\begin{proof}
From (\ref{eq2.2}) and (\ref{eq2.3}) it is easy to observe that
$A+B = g\left( 1 \right) - g\left( { - 1} \right)$. So that,
\begin{align*}
\mathcal{ER}\left( {f,g} \right)&:= \int_{ - 1}^1 {f\left( t
\right)dg\left( t \right)}  - Af\left( { - \frac{{\sqrt 3 }}{3}}
\right) - Bf\left( {\frac{{\sqrt 3 }}{3}} \right)
\\
&= \int_{ - 1}^1 {f\left( t \right)dg\left( t \right)}  -
\frac{1}{{g\left( 1 \right) - g\left( { - 1} \right)}}\int_{ -
1}^1 {\left[ {Af\left( { - \frac{{\sqrt 3 }}{3}} \right) +
Bf\left( {\frac{{\sqrt 3 }}{3}} \right)} \right]dg\left( t
\right)}
\\
&= \int_{ - 1}^1 {\left[ {f\left( t \right) - \frac{{Af\left( { -
{{\sqrt 3 } \mathord{\left/ {\vphantom {{\sqrt 3 } 3}} \right.
\kern-\nulldelimiterspace} 3}} \right) + Bf\left( {{{\sqrt 3 }
\mathord{\left/ {\vphantom {{\sqrt 3 } 3}} \right.
\kern-\nulldelimiterspace} 3}} \right)}}{{g\left( 1 \right) -
g\left( { - 1} \right)}}} \right]dg\left( t \right)}.
\end{align*}
It is well-known that for a continuous function $p:[a,b] \to
\mathbb{R}$ and a function $\nu:[a,b] \to \mathbb{R}$ of bounded
variation, one has the inequality
\begin{align}
\label{eq2.8}\left| {\int_a^b {p\left( t \right)d\nu\left( t
\right)} } \right| \le \mathop {\sup }\limits_{t \in \left[ {a,b}
\right]} \left| {p\left( t \right)} \right| \bigvee_a^b\left( \nu
\right).
\end{align}
Using (\ref{eq2.8}), we have
\begin{align*}
\left| {\mathcal{ER}\left( {f,g} \right)} \right| &= \left|
{\int_{ - 1}^1 {\left[ {f\left( t \right) - \frac{{Af\left( { -
{{\sqrt 3 } \mathord{\left/
 {\vphantom {{\sqrt 3 } 3}} \right.
 \kern-\nulldelimiterspace} 3}} \right) + Bf\left( {{{\sqrt 3 } \mathord{\left/
 {\vphantom {{\sqrt 3 } 3}} \right.
 \kern-\nulldelimiterspace} 3}} \right)}}{{g\left( 1 \right) - g\left( { - 1} \right)}}} \right]dg\left( t \right)} } \right|
 \\
&\le \sup \left| {f\left( t \right) - \frac{{Af\left( { - {{\sqrt
3 } \mathord{\left/
 {\vphantom {{\sqrt 3 } 3}} \right.
 \kern-\nulldelimiterspace} 3}} \right) + Bf\left( {{{\sqrt 3 } \mathord{\left/
 {\vphantom {{\sqrt 3 } 3}} \right.
 \kern-\nulldelimiterspace} 3}} \right)}}{{g\left( 1 \right) - g\left( { - 1} \right)}}} \right| \cdot \bigvee_{ - 1}^1 \left( g \right)
\\
&= \frac{1}{{g\left( 1 \right) - g\left( { - 1} \right)}}\sup
\left| {\left[ {g\left( 1 \right) - g\left( { - 1} \right)}
\right]f\left( t \right) - \left[ {Af\left( { - {{\sqrt 3 }
\mathord{\left/
 {\vphantom {{\sqrt 3 } 3}} \right.
 \kern-\nulldelimiterspace} 3}} \right) + Bf\left( {{{\sqrt 3 } \mathord{\left/
 {\vphantom {{\sqrt 3 } 3}} \right.
 \kern-\nulldelimiterspace} 3}} \right)} \right]} \right| \cdot \bigvee_{ - 1}^1 \left( g \right)
\\
&= \frac{1}{{g\left( 1 \right) - g\left( { - 1} \right)}}\sup
\left| {\left[ {A + B} \right]f\left( t \right) - \left[ {Af\left(
{ - {{\sqrt 3 } \mathord{\left/
 {\vphantom {{\sqrt 3 } 3}} \right.
 \kern-\nulldelimiterspace} 3}} \right) + Bf\left( {{{\sqrt 3 } \mathord{\left/
 {\vphantom {{\sqrt 3 } 3}} \right.
 \kern-\nulldelimiterspace} 3}} \right)} \right]} \right| \cdot \bigvee_{ - 1}^1 \left( g \right)
\\
&= \frac{1}{{g\left( 1 \right) - g\left( { - 1} \right)}}\sup
\left| {Af\left( t \right) + Bf\left( t \right) - Af\left( { -
{{\sqrt 3 } \mathord{\left/
 {\vphantom {{\sqrt 3 } 3}} \right.
 \kern-\nulldelimiterspace} 3}} \right) - Bf\left( {{{\sqrt 3 } \mathord{\left/
 {\vphantom {{\sqrt 3 } 3}} \right.
 \kern-\nulldelimiterspace} 3}} \right)} \right| \cdot \bigvee_{ - 1}^1 \left( g \right)
\\
&\le \frac{1}{{g\left( 1 \right) - g\left( { - 1} \right)}}\left[
{A\sup \left| {f\left( t \right) - f\left( { - {{\sqrt 3 }
\mathord{\left/
 {\vphantom {{\sqrt 3 } 3}} \right.
 \kern-\nulldelimiterspace} 3}} \right)} \right| + B\sup \left| {f\left( t \right) - f\left( {{{\sqrt 3 } \mathord{\left/
 {\vphantom {{\sqrt 3 } 3}} \right.
 \kern-\nulldelimiterspace} 3}} \right)} \right|} \right] \cdot \bigvee_{ - 1}^1 \left( g \right)
\\
&\le \frac{H_f}{{g\left( 1 \right) - g\left( { - 1}
\right)}}\left[ {A \mathop {\sup }\limits_{t \in \left[ { - 1,1}
\right]} \left| {t + \frac{1}{{\sqrt 3 }}} \right|^r  + B \mathop
{\sup }\limits_{t \in \left[ { - 1,1} \right]} \left| {t -
\frac{1}{{\sqrt 3 }}} \right|^r } \right] \cdot \bigvee_{ - 1}^1
\left( g \right)
\end{align*}
\begin{align*}
&= \frac{H_f}{{g\left( 1 \right) - g\left( { - 1} \right)}} \left(
{A + B} \right)\left( {{\frac{{3 + \sqrt 3 }}{3}}} \right)^r \cdot
\bigvee_{ - 1}^1 \left( g \right)
\\
&= H_f \left( {{\frac{{3 + \sqrt 3 }}{3}}} \right)^r  \cdot
\bigvee_{ - 1}^1 \left( g \right),
\end{align*}
which gives the required result.
\end{proof}

\begin{corollary}
Let $f : [-1,1] \to \mathbb{R}$ be such that $f$ is of
$r$-$H_f$--Holder type on $[-1,1]$, where $r>0$ and $H_f >0$ are
given. Then,
\begin{align}
\left| {\int_{ - 1}^1 {f\left( t \right)dt} - f\left( { -
\frac{{\sqrt 3 }}{3}} \right) - f\left( {\frac{{\sqrt 3 }}{3}}
\right)} \right| \le 2 H_f \left( {\frac{{3 + \sqrt 3 }}{3}}
\right)^r.
\end{align}
\end{corollary}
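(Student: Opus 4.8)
The plan is to obtain this corollary as the special case $g(t) = t$ of Theorem \ref{thm1}. First I would note that the identity function $g(t) = t$ on $[-1,1]$ is of bounded variation with $\bigvee_{-1}^1(g) = 2$ and satisfies $g(1) - g(-1) = 2 \neq 0$, so Theorem \ref{thm1} applies to every $r$-$H_f$--Holder function $f$, and moreover $\int_{-1}^1 f(t)\,dg(t) = \int_{-1}^1 f(t)\,dt$.

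Next I would evaluate the weights $A$ and $B$ of (\ref{eq2.2}) and (\ref{eq2.3}) for this $g$. Using $\int_{-1}^1 t\,dt = 0$, $g(1) = 1$ and $g(-1) = -1$, formula (\ref{eq2.2}) collapses to
\begin{align*}
A = \frac{3}{2\sqrt 3}\left[ -\frac{3-\sqrt 3}{3} + \frac{\sqrt 3 + 3}{3} \right] = \frac{3}{2\sqrt 3}\cdot\frac{2\sqrt 3}{3} = 1,
\end{align*}
and the identical computation applied to (\ref{eq2.3}) gives $B = 1$; this is consistent with the remark following (\ref{eq2.6}), where for $g(t) = t$ the rule (\ref{eq2.4}) reduces to $f(-\tfrac{\sqrt 3}{3}) + f(\tfrac{\sqrt 3}{3})$, i.e. to weights $A = B = 1$.

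Finally I would substitute $A = B = 1$ and $\bigvee_{-1}^1(g) = 2$ into the bound (\ref{main.ineq.1}) of Theorem \ref{thm1}, which immediately yields
\begin{align*}
\left| \int_{-1}^1 f(t)\,dt - f\!\left(-\frac{\sqrt 3}{3}\right) - f\!\left(\frac{\sqrt 3}{3}\right) \right| \le H_f\left(\frac{3+\sqrt 3}{3}\right)^r \cdot 2 = 2H_f\left(\frac{3+\sqrt 3}{3}\right)^r,
\end{align*}
which is the asserted inequality. There is no genuine obstacle in this argument: the only step requiring a little care is the arithmetic simplification showing $A = B = 1$, and once that is in hand the corollary is a direct instance of the theorem already proved.
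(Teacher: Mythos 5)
Your proposal is correct and is exactly the intended derivation: the paper states this corollary without proof as the specialization of Theorem \ref{thm1} to $g(t)=t$, for which $A=B=1$ (matching the reduction to (\ref{eq2.6})) and $\bigvee_{-1}^{1}(g)=2$, giving the factor $2$ in the bound. Your arithmetic for $A$ and $B$ checks out, so nothing further is needed.
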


\begin{theorem}
\label{thm2}Let $f,g : [-1,1] \to \mathbb{R}$ be such that $f$ is
of $r$-$H_f$--Holder type on $[-1,1]$, where $r>0$ and $H_f >0$
are given, and $g$ is $L_g$--Lipschitzian on $[-1,1]$. Then,
\begin{multline}
\label{main.ineq.2}\left| {\int_{ - 1}^1 {f\left( t
\right)dg\left( t \right)} - Af\left( { - \frac{{\sqrt 3 }}{3}}
\right) - Bf\left( {\frac{{\sqrt 3 }}{3}} \right)} \right|
\\
\le \frac{{L_g H_f }}{{r + 1}}\left[ {\left( {\frac{{3 - \sqrt 3
}}{3}} \right)^{r + 1}  + \left( {\frac{{3 + \sqrt 3 }}{3}}
\right)^{r + 1} } \right],
\end{multline}
where $A$ and $B$ are given in (\ref{eq2.2}) and (\ref{eq2.3});
respectively.
\end{theorem}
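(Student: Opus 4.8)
The plan is to reproduce the proof of Theorem~\ref{thm1} almost verbatim, the only change being that the bounded--variation estimate (\ref{eq2.8}) is replaced by its Lipschitz counterpart: for a Riemann integrable $p:[a,b]\to\mathbb{R}$ and an $L$--Lipschitzian $v:[a,b]\to\mathbb{R}$ one has
\begin{align*}
\left| {\int_a^b {p\left( t \right)\,dv\left( t \right)} } \right| \le L\int_a^b {\left| {p\left( t \right)} \right|\,dt}.
\end{align*}
Since $f$ is Holder (hence continuous) and $g$ is Lipschitzian, the Riemann--Stieltjes integral $\int_{-1}^{1}f\,dg$ exists and this estimate applies to the continuous kernel produced below.

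First I would recall, exactly as in the proof of Theorem~\ref{thm1}, that $A+B=g\left(1\right)-g\left(-1\right)$, an identity that follows at once from (\ref{eq2.2}) and (\ref{eq2.3}); hence $\int_{-1}^{1}dg\left(t\right)=A+B$ and the error functional can be written in kernel form as
\begin{align*}
\mathcal{ER}\left( {f,g} \right) = \int_{ - 1}^1 {\left[ {f\left( t \right) - \frac{{Af\left( { - \frac{{\sqrt 3 }}{3}} \right) + Bf\left( {\frac{{\sqrt 3 }}{3}} \right)}}{{g\left( 1 \right) - g\left( { - 1} \right)}}} \right]dg\left( t \right)}.
\end{align*}
Next I would apply the Lipschitz estimate to this kernel, clear the factor $g\left(1\right)-g\left(-1\right)=A+B$ from the denominator, regroup the resulting numerator as the two differences $f\left(t\right)-f\left(-\tfrac{\sqrt 3}{3}\right)$ and $f\left(t\right)-f\left(\tfrac{\sqrt 3}{3}\right)$ weighted by $A$ and $B$ respectively, and bound each difference by the $r$-$H_f$--Holder property; this leads to
\begin{align*}
\left| {\mathcal{ER}\left( {f,g} \right)} \right| \le \frac{{L_g H_f }}{{A + B}}\left[ {A\int_{ - 1}^1 {\left| {t + \tfrac{1}{{\sqrt 3 }}} \right|^r dt}  + B\int_{ - 1}^1 {\left| {t - \tfrac{1}{{\sqrt 3 }}} \right|^r dt} } \right].
\end{align*}

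It remains to evaluate the two integrals. Splitting each of them at the corresponding node $\mp\tfrac{1}{\sqrt 3}\in(-1,1)$ and integrating a power, one obtains the same value for both,
\begin{align*}
\int_{ - 1}^1 {\left| {t + \tfrac{1}{{\sqrt 3 }}} \right|^r dt}  = \int_{ - 1}^1 {\left| {t - \tfrac{1}{{\sqrt 3 }}} \right|^r dt}  = \frac{1}{{r + 1}}\left[ {\left( {\tfrac{{3 - \sqrt 3 }}{3}} \right)^{r + 1}  + \left( {\tfrac{{3 + \sqrt 3 }}{3}} \right)^{r + 1} } \right].
\end{align*}
Substituting this common value, the bracket becomes $\left(A+B\right)$ times it, the factor $\left(A+B\right)$ cancels the denominator, and the desired inequality (\ref{main.ineq.2}) follows. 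I do not expect a real obstacle here: the fact that the two integrals are \emph{equal} is precisely what allows the $A$- and $B$-terms to coalesce into $A+B$, and, just as in the proof of Theorem~\ref{thm1}, the quantities $A$, $B$ and $g\left(1\right)-g\left(-1\right)$ are handled as positive when distributing the integral over the two Holder terms. The only computational content is the last display, which needs nothing beyond splitting $[-1,1]$ at $\mp1/\sqrt 3$ and integrating $u^{r}$.
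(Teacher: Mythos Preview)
Your proposal is correct and follows essentially the same route as the paper's own proof: write $\mathcal{ER}(f,g)$ in the kernel form via $A+B=g(1)-g(-1)$, apply the Lipschitz bound in place of the bounded-variation estimate, split into the two H\"older differences weighted by $A$ and $B$, and evaluate the resulting integrals. You are in fact slightly more explicit than the paper, which jumps directly from the weighted integral to the final bound without spelling out that the two integrals $\int_{-1}^{1}|t\pm 1/\sqrt{3}|^{r}\,dt$ are equal and that this is what produces the factor $A+B$ that cancels the denominator.
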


\begin{proof}
It is well-known that for a Riemann integrable function $p:[a,b]
\to \mathbb{R}$ and $L$--Lipschitzian function $\nu:[a,b] \to
\mathbb{R}$, one has the inequality
\begin{align}
\left| {\int_a^b {p\left( t \right)d\nu\left( t \right)} } \right|
\le L \int_a^b{\left| {p\left( {t} \right)} \right|dt}.
\label{eq2.10}
\end{align}
Using (\ref{eq2.10}), we have
\begin{align*}
\left| {\mathcal{ER}\left( {f,g} \right)} \right| &= \left|
{\int_{ - 1}^1 {\left[ {f\left( t \right) - \frac{{Af\left( { -
{{\sqrt 3 } \mathord{\left/
 {\vphantom {{\sqrt 3 } 3}} \right.
 \kern-\nulldelimiterspace} 3}} \right) + Bf\left( {{{\sqrt 3 } \mathord{\left/
 {\vphantom {{\sqrt 3 } 3}} \right.
 \kern-\nulldelimiterspace} 3}} \right)}}{{g\left( 1 \right) - g\left( { - 1} \right)}}} \right]dg\left( t \right)} } \right|
 \\
&\le L_g \int_{ - 1}^1 {\left| {f\left( t \right) -
\frac{{Af\left( { - {{\sqrt 3 } \mathord{\left/
 {\vphantom {{\sqrt 3 } 3}} \right.
 \kern-\nulldelimiterspace} 3}} \right) + Bf\left( {{{\sqrt 3 } \mathord{\left/
 {\vphantom {{\sqrt 3 } 3}} \right.
 \kern-\nulldelimiterspace} 3}} \right)}}{{g\left( 1 \right) - g\left( { - 1} \right)}}} \right|dt}
\\
&= \frac{{L_g }}{{g\left( 1 \right) - g\left( { - 1}
\right)}}\int_{ - 1}^1 {\left| {\left[ {g\left( 1 \right) -
g\left( { - 1} \right)} \right]f\left( t \right) - \left[
{Af\left( { - {{\sqrt 3 } \mathord{\left/
 {\vphantom {{\sqrt 3 } 3}} \right.
 \kern-\nulldelimiterspace} 3}} \right) + Bf\left( {{{\sqrt 3 } \mathord{\left/
 {\vphantom {{\sqrt 3 } 3}} \right.
 \kern-\nulldelimiterspace} 3}} \right)} \right]} \right|dt}
\\
&= \frac{{L_g }}{{g\left( 1 \right) - g\left( { - 1}
\right)}}\int_{ - 1}^1 {\left| {\left[ {A + B} \right]f\left( t
\right) - \left[ {Af\left( { - {{\sqrt 3 } \mathord{\left/
 {\vphantom {{\sqrt 3 } 3}} \right.
 \kern-\nulldelimiterspace} 3}} \right) + Bf\left( {{{\sqrt 3 } \mathord{\left/
 {\vphantom {{\sqrt 3 } 3}} \right.
 \kern-\nulldelimiterspace} 3}} \right)} \right]} \right|dt}
\\
&= \frac{{L_g }}{{g\left( 1 \right) - g\left( { - 1}
\right)}}\int_{ - 1}^1 {\left[ {A\left| {f\left( t \right) -
f\left( { - {{\sqrt 3 } \mathord{\left/
 {\vphantom {{\sqrt 3 } 3}} \right.
 \kern-\nulldelimiterspace} 3}} \right)} \right| + B\left| {f\left( t \right) - f\left( {{{\sqrt 3 } \mathord{\left/
 {\vphantom {{\sqrt 3 } 3}} \right.
 \kern-\nulldelimiterspace} 3}} \right)} \right|} \right]dt}
\\
&= \frac{{L_g H_f }}{{g\left( 1 \right) - g\left( { - 1}
\right)}}\int_{ - 1}^1 {\left[ {A\left| {t + \frac{{\sqrt 3 }}{3}}
\right|^r  + B\left| {t - \frac{{\sqrt 3 }}{3}} \right|^r }
\right]dt}
\\
&= \frac{{L_g H_f }}{{r + 1}}\left[ {\left( {\frac{{3 - \sqrt 3
}}{3}} \right)^{r + 1}  + \left( {\frac{{3 + \sqrt 3 }}{3}}
\right)^{r + 1} } \right],
\end{align*}
which gives the required result.
\end{proof}

\begin{corollary}
Let $g$ be as in Theorem \ref{thm2}. If $f$ is $L_f$--Lipschitzian
on $[-1,1]$. Then,
\begin{align}
\label{main.ineq.2}\left| {\int_{ - 1}^1 {f\left( t
\right)dg\left( t \right)} - Af\left( { - \frac{{\sqrt 3 }}{3}}
\right) - Bf\left( {\frac{{\sqrt 3 }}{3}} \right)} \right| \le
\frac{4}{{3}} L_f L_g,
\end{align}
where $A$ and $B$ are given in (\ref{eq2.2}) and (\ref{eq2.3});
respectively.
\end{corollary}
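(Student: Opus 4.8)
The plan is to observe that an $L_f$--Lipschitzian function is precisely a function of $r$-$H_f$--Hölder type with $r=1$ and $H_f=L_f$, since $\left|f(x)-f(y)\right|\le L_f\left|x-y\right|=L_f\left|x-y\right|^1$ for all $x,y\in[-1,1]$. Hence the hypotheses of Theorem~\ref{thm2} are satisfied with this particular choice of parameters, and the corollary will follow by specializing the bound \eqref{main.ineq.2}.

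Concretely, I would first invoke Theorem~\ref{thm2} with $r=1$ and $H_f=L_f$ to get
\begin{align*}
\left|\int_{-1}^1 f\left(t\right)dg\left(t\right)-Af\left(-\frac{\sqrt 3}{3}\right)-Bf\left(\frac{\sqrt 3}{3}\right)\right|\le\frac{L_g L_f}{2}\left[\left(\frac{3-\sqrt 3}{3}\right)^{2}+\left(\frac{3+\sqrt 3}{3}\right)^{2}\right].
\end{align*}
Then the only remaining task is the arithmetic simplification of the bracketed expression. I would compute
\begin{align*}
\left(\frac{3-\sqrt 3}{3}\right)^{2}+\left(\frac{3+\sqrt 3}{3}\right)^{2}=\frac{1}{9}\left[\left(3-\sqrt 3\right)^{2}+\left(3+\sqrt 3\right)^{2}\right]=\frac{1}{9}\cdot 2\left(9+3\right)=\frac{24}{9}=\frac{8}{3},
\end{align*}
using the identity $(p-q)^2+(p+q)^2=2(p^2+q^2)$ with $p=3$, $q=\sqrt 3$. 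Substituting this back gives $\dfrac{L_g L_f}{2}\cdot\dfrac{8}{3}=\dfrac{4}{3}L_f L_g$, which is exactly the claimed inequality.

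There is essentially no serious obstacle here: the corollary is a direct specialization of Theorem~\ref{thm2} followed by an elementary computation. The only point requiring any care is recognizing that Lipschitz continuity is the $r=1$ case of Hölder continuity (so that Theorem~\ref{thm2} genuinely applies with $H_f=L_f$), and then carrying out the numerical simplification correctly; both are routine.
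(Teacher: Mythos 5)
Your proof is correct and is exactly the intended route: the corollary is the $r=1$, $H_f=L_f$ specialization of Theorem~\ref{thm2}, and your arithmetic $\left(\frac{3-\sqrt 3}{3}\right)^{2}+\left(\frac{3+\sqrt 3}{3}\right)^{2}=\frac{8}{3}$, hence $\frac{L_gL_f}{2}\cdot\frac{8}{3}=\frac{4}{3}L_fL_g$, checks out. The paper states the corollary without proof precisely because it follows in this way, so there is nothing to add.
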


\begin{corollary}
Let $f$ be as in Theorem \ref{thm2}. Take $g(t)= t$ on $[-1,1]$.
Then,
\begin{multline}
\left| {\int_{ - 1}^1 {f\left( t \right)dt} - f\left( { -
\frac{{\sqrt 3 }}{3}} \right) - f\left( {\frac{{\sqrt 3 }}{3}}
\right)} \right|
\\
\le \frac{{H_f }}{{r + 1}}\left[ {\left( {\frac{{3 - \sqrt 3
}}{3}} \right)^{r + 1}  + \left( {\frac{{3 + \sqrt 3 }}{3}}
\right)^{r + 1} } \right].
\end{multline}
\end{corollary}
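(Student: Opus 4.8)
The plan is to obtain this corollary as an immediate specialization of Theorem~\ref{thm2}, so the work reduces to checking that the hypotheses of that theorem are met by $g(t)=t$ and to evaluating the constants $A$ and $B$ in this case. First I would note that $g(t)=t$ is $1$-Lipschitzian on $[-1,1]$, so Theorem~\ref{thm2} applies with $L_g=1$. Next I would compute $A$ and $B$ from \eqref{eq2.2} and \eqref{eq2.3}: for $g(t)=t$ we have $\int_{-1}^1 g(t)\,dt=0$, $g(1)=1$, $g(-1)=-1$, and a one-line calculation gives $A=\frac{3}{2\sqrt3}\cdot\frac{2\sqrt3}{3}=1$ and likewise $B=1$ (consistent, of course, with \eqref{eq2.6}).

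With these identifications, the left-hand side of the inequality in Theorem~\ref{thm2} becomes exactly $\bigl|\int_{-1}^1 f(t)\,dt - f(-\tfrac{\sqrt3}{3}) - f(\tfrac{\sqrt3}{3})\bigr|$, since $\int_{-1}^1 f\,dg=\int_{-1}^1 f(t)\,dt$ when $g(t)=t$ and $Af(-\tfrac{\sqrt3}{3})+Bf(\tfrac{\sqrt3}{3})=f(-\tfrac{\sqrt3}{3})+f(\tfrac{\sqrt3}{3})$. On the right-hand side, substituting $L_g=1$ turns the bound $\frac{L_g H_f}{r+1}\bigl[\bigl(\frac{3-\sqrt3}{3}\bigr)^{r+1}+\bigl(\frac{3+\sqrt3}{3}\bigr)^{r+1}\bigr]$ into precisely $\frac{H_f}{r+1}\bigl[\bigl(\frac{3-\sqrt3}{3}\bigr)^{r+1}+\bigl(\frac{3+\sqrt3}{3}\bigr)^{r+1}\bigr]$, which is the claimed estimate.

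Since the argument is a direct substitution into an already-proved theorem, there is essentially no obstacle here; the only step requiring a short computation is the verification that $A=B=1$ for $g(t)=t$, which follows at once from \eqref{eq2.2}--\eqref{eq2.3}. I would therefore present the proof simply by invoking Theorem~\ref{thm2} with $g(t)=t$, $L_g=1$, and recording the values $A=B=1$.
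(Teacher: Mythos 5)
Your proposal is correct and is exactly the specialization the paper intends (the corollary is stated without proof as an immediate consequence of Theorem \ref{thm2}): taking $g(t)=t$ gives $L_g=1$, $\int_{-1}^1 g(t)\,dt=0$, and $A=B=1$ from \eqref{eq2.2}--\eqref{eq2.3}, so the bound follows by direct substitution. Nothing is missing.
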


\begin{remark}
\begin{enumerate}
\item[a)] A result for a monotonic non-decreasing integrator may
be stated by using the fact that for a monotonic non-decreasing
function $\nu:[a,b] \to \mathbb{R}$ and continuous function
$p:[a,b] \to \mathbb{R}$, one has the inequality
\begin{align*}
\left| {\int_a^b {p\left( t \right)d\nu\left( t \right)} } \right|
\le \int_a^b {\left| {p\left( t \right)} \right|d\nu \left( t
\right)}.
\end{align*}

\item[b)] Another result(s) in terms of $L_p$ norms may be stated
by applying the well--known H\"{o}lder integral inequality, by
noting that
\begin{align*}
\left| {\int_c^d {h\left( s \right)du\left( s \right)} } \right|
\le \sqrt[q]{{u\left( d \right) - u\left( c \right)}} \times
\sqrt[p]{{\int_c^d {\left| {h\left( s \right)} \right|^p du\left(
s \right)} }}.
\end{align*}
where, $p>1$, $\frac{1}{p} + \frac{1}{q} = 1$.
\end{enumerate}
\end{remark}

A result for absolutely continuous integrand whose derivative
belongs to $L_2(-1,1)$ is given as follows:
\begin{theorem}
Let $f,g:[-1,1] \longrightarrow \mathbb{R}$ be two absolutely
continuous functions whose derivatives $f^{\prime}, g^{\prime} \in
L_2(-1,1)$ and $\int_{ - 1}^1 {f\left( t \right)g'\left( t
\right)dt}$ exists. Then
\begin{align}
\left| {\int_{ - 1}^1 {f\left( t \right)g'\left( t \right)dt} -
Af\left( { - \frac{{\sqrt 3 }}{3}} \right) - Bf\left(
{\frac{{\sqrt 3 }}{3}} \right)} \right| \le  \sigma ^{1/2} \left(
f \right) \cdot \sigma ^{1/2} \left( {g'} \right), \label{eq2.14}
\end{align}
where $A$ and $B$ are given in (\ref{eq2.2}) and (\ref{eq2.3});
respectively.
\begin{align*}
\sigma \left( h \right) = 2\mathcal{T}\left( {h,h} \right);
\end{align*}
and
\begin{align*}
\mathcal{T}\left( {h,h} \right) = \frac{1}{{2}}\left\| h
\right\|_2^2  - \frac{1}{{4}}\left( {\int_{-1}^{1} {h\left( t
\right)dt} } \right)^2.
\end{align*}
\end{theorem}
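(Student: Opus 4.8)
The plan is to recast the quadrature error as a Chebyshev (Gr\"uss) functional and then to apply the Cauchy--Schwarz inequality for that functional.

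Write $\mathcal{ER}\left( f,g\right) :=\int_{-1}^{1}{f\left( t\right) dg\left( t\right) }-Af\left( {-\frac{\sqrt 3}{3}}\right) -Bf\left( {\frac{\sqrt 3}{3}}\right) $. Since $g$ is absolutely continuous we may replace $dg\left( t\right) $ by $g^{\prime }\left( t\right) dt$ throughout, and $\int_{-1}^{1}{f\left( t\right) g^{\prime }\left( t\right) dt}$ exists by hypothesis. Exactly as in the proof of Theorem \ref{thm1}, using $A+B=g\left( 1\right) -g\left( -1\right) $, I would first bring the error to the form
\begin{align*}
\mathcal{ER}\left( f,g\right) =\int_{-1}^{1}{\left[ {f\left( t\right) -\frac{{Af\left( {-\frac{\sqrt 3}{3}}\right) +Bf\left( {\frac{\sqrt 3}{3}}\right) }}{{g\left( 1\right) -g\left( -1\right) }}}\right] g^{\prime }\left( t\right) dt}.
\end{align*}
Equivalently, since the rule integrates every affine function exactly, $\mathcal{ER}\left( f,g\right) =\int_{-1}^{1}{\left[ {f\left( t\right) -\ell \left( t\right) }\right] g^{\prime }\left( t\right) dt}$ for the affine interpolant $\ell $ of $f$ at $\pm \frac{\sqrt 3}{3}$; this is the representation from which the estimate will be extracted.

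The heart of the argument is to identify this integral with (a multiple of) the Chebyshev functional $\mathcal{T}\left( {f,g^{\prime }}\right) $. Using Korkine's identity on $\left[ -1,1\right] $,
\begin{align*}
\int_{-1}^{1}{p\left( t\right) q\left( t\right) dt}=\frac{1}{2}\left( {\int_{-1}^{1}{p}}\right) \left( {\int_{-1}^{1}{q}}\right) +\frac{1}{4}\int_{-1}^{1}\int_{-1}^{1}{\left( {p\left( t\right) -p\left( s\right) }\right) \left( {q\left( t\right) -q\left( s\right) }\right) ds\,dt},
\end{align*}
together with the exactness of the quadrature on the constant and on the linear function, I expect the terms that are not of Chebyshev type to cancel, leaving $\mathcal{ER}\left( f,g\right) =2\mathcal{T}\left( {f,g^{\prime }}\right) $ (the factor $2=b-a$ being the length of the interval). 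Checking that this cancellation is complete is the step I anticipate being the main obstacle; it is here that the hypotheses $f^{\prime }\in L_{2}\left( -1,1\right) $ and the existence of $\int_{-1}^{1}{fg^{\prime }dt}$ are used, so that the interchange of integration is legitimate.

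Once the error is written as $2\mathcal{T}\left( {f,g^{\prime }}\right) $, apply the Cauchy--Schwarz inequality for the Chebyshev functional, $\left| {\mathcal{T}\left( {u,v}\right) }\right| \le \mathcal{T}^{1/2}\left( {u,u}\right) \mathcal{T}^{1/2}\left( {v,v}\right) $, which follows at once from the double-integral form of Korkine's identity above and the ordinary Cauchy--Schwarz inequality in $L_{2}\left( \left[ -1,1\right] ^{2}\right) $. This gives $\left| {\mathcal{ER}\left( f,g\right) }\right| \le 2\mathcal{T}^{1/2}\left( {f,f}\right) \mathcal{T}^{1/2}\left( {g^{\prime },g^{\prime }}\right) $. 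Finally, substituting $\sigma \left( h\right) =2\mathcal{T}\left( {h,h}\right) $ turns the right-hand side into $\sigma ^{1/2}\left( f\right) \sigma ^{1/2}\left( {g^{\prime }}\right) $, while the identity $\mathcal{T}\left( {h,h}\right) =\frac{1}{2}\left\| h\right\| _{2}^{2}-\frac{1}{4}\left( {\int_{-1}^{1}{h\left( t\right) dt}}\right) ^{2}$ records $\sigma $ in the form stated in the theorem. Steps one and three are routine; the work is concentrated in the Korkine-identity reduction of the middle step, and one would also note that equality propagates from the Cauchy--Schwarz step when $f$ and $g^{\prime }$ agree, up to an affine term, with the Legendre polynomial $\frac{3t^{2}-1}{2}$ whose zeros are $\pm \frac{\sqrt 3}{3}$.
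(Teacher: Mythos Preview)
Your plan coincides with the paper's own proof: express the error as $\int_{-1}^{1}(f-\text{const})\,g'$, assert that this equals $2\mathcal{T}(f,g')$, and then apply the Cauchy--Schwarz inequality for $\mathcal{T}$. The step you flagged as the main obstacle is exactly where the argument breaks, and the cancellation you hope for does not occur. Writing (as the paper does) $K(t)=f(t)-F$ with $F=\dfrac{Af(-\sqrt3/3)+Bf(\sqrt3/3)}{A+B}$, one indeed has $\mathcal{ER}(f,g)=\int_{-1}^{1}K\,g'$, but centering both factors leaves a cross term,
\[
\int_{-1}^{1}Kg'\,dt\;-\;2\mathcal{T}(K,g')
\;=\;\Bigl(\tfrac{1}{2}\!\int_{-1}^{1}K\Bigr)\Bigl(\int_{-1}^{1}g'\Bigr)
\;=\;\Bigl(\tfrac{1}{2}\!\int_{-1}^{1}f-F\Bigr)\bigl(A+B\bigr),
\]
which vanishes only when $\tfrac{1}{2}\int_{-1}^{1}f=F$, a constraint not implied by the hypotheses. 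Replacing the constant $F$ by your affine interpolant $\ell$ does not help either: the residual becomes $\bigl(\tfrac{1}{2}\int_{-1}^{1}f-\tfrac{1}{2}[f(-\sqrt3/3)+f(\sqrt3/3)]\bigr)(A+B)$, i.e.\ the classical Gauss--Legendre error for $\int_{-1}^{1}f$, again nonzero in general.

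In fact the stated inequality is false, so no reduction of this kind can be repaired. Take $g(t)=t$: then $g'\equiv1$, $A=B=1$, and $\mathcal{T}(g',g')=\tfrac{1}{2}\cdot 2-\tfrac{1}{4}\cdot 4=0$, so the right-hand side is $0$. With $f(t)=t^{4}$ the left-hand side is $\bigl|\int_{-1}^{1}t^{4}\,dt-2(1/\sqrt3)^{4}\bigr|=\bigl|\tfrac{2}{5}-\tfrac{2}{9}\bigr|=\tfrac{8}{45}>0$. The paper's own proof asserts the same unjustified identity in its display (\ref{eq2.15}) and therefore shares this gap.
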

\begin{proof}
Since $A+B = g\left( 1 \right) - g\left( { - 1} \right)$. For
simplicity, we set
\begin{equation*}
F = {\frac{{Af\left( { - 1/\sqrt 3 } \right) + Bf\left( {1/\sqrt
3} \right)}}{{A + B}}}.
\end{equation*}
Define the mapping $K(t)= f(t) - F$. It is easy to observe that
\begin{align}
\mathcal{ER}\left( {f,g} \right) &=\int_{ - 1}^1 {K\left( t
\right)g'\left( t \right)dt}
\nonumber\\
&= \int_{ - 1}^1 {\left[ {K\left( t \right) - \frac{1}{2}\int_{ -
1}^1 {K\left( s \right)ds} } \right]\left[ {g'\left( t \right) -
\frac{1}{2}\int_{ - 1}^1 {g'\left( s \right)ds} } \right]dt}
\nonumber\\
&=2\mathcal{T}\left( {K,g'} \right). \label{eq2.15}
\end{align}
So that, we may write
\begin{align}
\mathcal{T}^2\left( {K,g'} \right) &= \frac{1}{4}\left\{ {\int_{ -
1}^1 {\left[ {K\left( t \right) - \frac{1}{2}\int_{ - 1}^1
{K\left( s \right)ds} } \right]\left[ {g'\left( t \right) -
\frac{1}{2}\int_{ - 1}^1 {g'\left( s \right)ds} } \right]dt}}
\right\}^2
\nonumber\\
&\le \frac{1}{4}\int_{ - 1}^1 {\left[ {K\left( t \right) -
\frac{1}{2}\int_{ - 1}^1 {K\left( s \right)ds} } \right]^2 dt}
\cdot \int_{ - 1}^1 {\left[ {g'\left( t \right) -
\frac{1}{2}\int_{ - 1}^1 {g'\left( s \right)ds} } \right]^2
dt}\label{eq2.16}
\end{align}
But since
\begin{align*}
\int_{ - 1}^1 {\left[ {K\left( t \right) - \frac{1}{2}\int_{ -
1}^1 {K\left( s \right)ds} } \right]^2 dt} &= \int_{ - 1}^1
{\left[ {\left( {f\left( t \right) - F} \right) -
\frac{1}{2}\int_{ - 1}^1 {\left( {f\left( s \right) - F}
\right)ds} } \right]^2 dt}
\\
&= \int_{ - 1}^1 \left[ {\left( {f\left( t \right) -
\frac{{Af\left( { - 1/\sqrt 3 } \right) + Bf\left( {1/\sqrt 3 }
\right)}}{{A + B}}} \right)} \right.
\\
&\qquad\qquad- \left. {\frac{1}{2}\int_{ - 1}^1 {\left( {f\left( s
\right) - \frac{{Af\left( { - 1/\sqrt 3 } \right) + Bf\left(
{1/\sqrt 3 } \right)}}{{A + B}}} \right)ds} } \right]^2 dt
\\
&= \int_{ - 1}^1 {\left[ {f\left( t \right) - \frac{1}{2}\int_{ -
1}^1 {f\left( s \right)ds} } \right]^2 dt}
\\
&= \int_{ - 1}^1 {f^2 \left( t \right)dt}  - \frac{1}{2}\left(
{\int_{ - 1}^1 {f\left( t \right)dt} } \right)^2
\\
&= 2\mathcal{T} \left( {f,f} \right),
\end{align*}
which gives by (\ref{eq2.16}) that
\begin{align*}
\left| {\mathcal{T}\left( {K,g'} \right) } \right| \le
\mathcal{T}^{1/2} \left( {f,f} \right)\mathcal{T}^{1/2} \left(
{g',g'} \right).
\end{align*}
Combining the above inequality with (\ref{eq2.15}) we get the
required result (\ref{eq2.14}).
\end{proof}

\centerline{}

\centerline{}


\begin{thebibliography}{9}
\setlength{\itemsep}{5pt}

\bibitem{alomari1}
M.W. Alomari, A companion of Dragomir's generalization of
Ostrowski's inequality and applications in numerical integration,
\textit{Ukrainian Math. J.}, 64(4) (2012), 491--510.

\bibitem{alomari2}
M.W. Alomari, A companion of Ostrowski's inequality for the
Riemann-Stieltjes integral $\int_{a}^{b}{f\left( t\right) du\left(
t\right) }$, where $f$ is of $r$-$H$-H\"{o}lder type and $u$ is of
bounded variation and applications, submitted. Avalibale at: %
\url{http://ajmaa.org/RGMIA/papers/v14/v14a59.pdf}.

\bibitem{alomari3}
M.W. Alomari, A companion of Ostrowski's inequality for the
Riemann-Stieltjes integral $\int_{a}^{b}{f\left( t\right) du\left(
t\right) }$, where $f$ is of bounded variation and $u$ is of
$r$-$H$-H\"{o}lder type and applications, \textit{Appl. Math.
Comput.}, 219 (2013), 4792--4799.

\bibitem{alomari4}
M.W. Alomari, New sharp inequalities of Ostrowski and generalized
trapezoid type for the Riemann--Stieltjes integrals and
applications, \textit{Ukrainian Mathematical
Journal},  65 (7) 2013, 995--1018.




\bibitem{Barnett1} N.S. Barnett, S.S. Dragomir and I. Gomma,  A
companion for the Ostrowski and the generalised trapezoid
inequalities, \textit{ Math. and Comp. Mode.}, 50 (2009),
179--187.

\bibitem{Barnett2}
N.S. Barnett, W.-S. Cheung, S.S. Dragomir, A. Sofo,  Ostrowski and
trapezoid type inequalities for the Stieltjes integral with
Lipschitzian integrands or integrators, \textit{Comp. Math. Appl.
}, 57 (2009), 195--201.


\bibitem{CeroneDragomir}
P. Cerone, S.S. Dragomir, New bounds for the three-point rule
involving the Riemann-Stieltjes integrals, in: C. Gulati, et al.
(Eds.), Advances in Statistics Combinatorics and Related Areas,
World Science Publishing, 2002, pp. 53--62.

\bibitem{CeroneDragomir1}
P. Cerone, S.S. Dragomir, Approximating the Riemann--Stieltjes
integral via some moments of the integrand, \textit{Mathematical
and Computer Modelling}, 49 (2009), 242--248.


\bibitem{Dragomir1}
S.S. Dragomir, Ostrowski integral inequality for mappings of
bounded variation, \textit{Bull. Austral. Math. Soc.}, 60 (1999)
495--508.

\bibitem{Dragomir2}
S.S. Dragomir, On the Ostrowski inequality for Riemann--Stieltjes
integral $\int_a^b{f(t) du(t)}$ where $f$ is of H\"{o}lder type
and $u$ is of bounded variation and applications, \textit{J.
KSIAM}, 5 (2001), 35--45.

\bibitem{Dragomir3}
S.S. Dragomir, On the Ostrowski's inequality for Riemann--Stieltes
integral and applications, \textit{Korean J. Comput. \& Appl.
Math.}, 7 (2000),  611--627.


\bibitem{Dragomir4}
S.S. Dragomir, C. Bu\c{s}e, M.V. Boldea and L. Braescu, A
generalisation of the trapezoid rule for the Riemann--Stieltjes
integral and applications, \textit{Nonlinear Anal. Forum}, 6 (2)
(2001) 337--351.

\bibitem{Dragomir5}
S.S. Dragomir, Some inequalities of midpoint and trapezoid type
for the Riemann-Stieltjes integral, \textit{Nonlinear Anal.}, 47
(4) (2001), 2333--2340.

\bibitem{Dragomir6}
S.S. Dragomir, Refinements of the generalised trapezoid and
Ostrowski inequalities for functions of bounded variation,
\textit{Arch. Math.},  91 (2008), 450--460

\bibitem{Dragomir7}
S.S. Dragomir, Approximating the Riemann--Stieltjes integral in
terms of generalised trapezoidal rules, \textit{Nonlinear Anal.}
TMA 71 (2009), e62--e72.

\bibitem{Dragomir8}
S.S. Dragomir, Approximating the Riemann--Stieltjes integral by a
trapezoidal quadrature rule with applications,
\textit{Mathematical and Computer Modelling} 54 (2011), 243--260.

\bibitem{DragomirFedotov}
S.S. Dragomir, I. Fedotov, An inequality of Gr\"{u}ss type for
Riemann--Stieltjes integral and applications for special means,
\textit{Tamkang J. Math.}, 29 (4) (1998) 287--292


\bibitem{Mercer}
R.P. Mercer, Hadamard's inequality and trapezoid rules for the
Riemann--Stieltjes integral, \textit{J. Math. Anal. Appl.}, (344)
(2008), 921–-927.

\bibitem{Ujevic1}
N. Ujevi\'{c}, Two sharp Ostrowski-like inequalities and
applications, \textit{Methods and Applications of Analysis} 10 (3)
(2003), 477--486.

\bibitem{Ujevic2}
N. Ujevi\'{c}, Inequalities of Ostrowski-Gr\"{u}ss type and
applications, \textit{Appl. Math.}, 29 (4) (2002), 465--479.
 29:4 (2002), pp. 465–479.
\end{thebibliography}
\end{document}